\DeclareMathOperator{\diag}{diag}
\newcommand{\1}{\mathbf{1}}
\newcommand{\0}{\mathbf{0}}
\newtheorem{lemma}{Lemma}
\newtheorem{theorem}{Theorem}
\newtheorem{assumption}{Assumption}
\newcommand\undermat[2]{%
  \makebox[0pt][l]{$\smash{\underbrace{\phantom{%
    \begin{matrix}#2\end{matrix}}}_{\text{$#1$}}}$}#2}
\title{Analysis and Estimation of Networked SIR \& SEIR Models with Transportation Networks}
\author{Damir Vrabac, 
Raphael Stern, 
        Philip~E.~Par\'e
*\thanks{
* Damir Vrabac is with the School of Electrical Engineering at Stanford University. 
Raphael Stern is with the Department of Civil, Environmental, and Geo- Engineering at the University of Minnesota. 
Philip E. Par\'{e} is with the School of Electrical and Computer Engineering at Purdue University.
E-mails: dvrabac@stanford.edu, rstern@umn.edu, philpare@purdue.edu. This material is based upon work supported by the National Science Foundation under Grant No. CNS-2028946 (R.S.) and Grant No. CNS-2028738 (P.E.P.), as well as the C3.ai Digital Transformation Institute sponsored by C3.ai Inc. and the Microsoft Corporation (P.E.P.).
}}
\begin{document}
\maketitle

\begin{abstract}
In this paper we present the discrete-time networked SIR and SEIR models and present assumptions under which they are well defined. We analyze the limiting behavior of the models and present necessary and sufficient conditions for estimating the spreading parameters from data. We illustrate these results via simulation.
\end{abstract}

\section{Introduction}

Given recent outbreaks, it is critical to be able to quickly track the spread of the virus and understand the mechanisms that are enabling their propagation. While the mode of transmission of recent novel viruses is not exactly known, human-to-human interaction appears to be a main factor~\cite{lewnard2020scientific}. A key component for transmission is the underlying transportation network, which acts as a propagator of the virus within and between communities.

In this work we extend the commonly used 
SIR
~\cite{sir} and 
SEIR
~\cite{seir} models for viral spread to consider spread over the network in the context of human interaction and transportation. We model the proportion of people in each county who have not been infected ($S$), those who have been infected but have not been confirmed via a test ($E$), test-confirmed infected cases ($I$), and those who have either recovered or died from the virus ($R$) 
and show that we are able to accurately model the evolution of such a virus, as well as how to recover the proper model parameter values from time series data of infections and recoveries. 

Previous work has explored the networked SIR models \cite{mei2017epidemic,sir_closed_loop}. Specifically, the authors in \cite{mei2017epidemic} provide stability properties and asymptotic convergence, as well as a novel algorithm to compute the asymptomatic state of the network SIR system. More recently, the SEIR model has become popular for modeling epidemic spread (e.g., \cite{zhou2011analysis}). 
The model has also been extended to account for quarantine~\cite{groendyke2020modifying} and asymptomatic transmission~\cite{arcede2020accounting}. 
We go beyond prior work by analyzing the limiting behavior of the network SEIR model and present estimation results of the spread parameters.

The remainder of the article is outlined as follows. The networked SEIR model is introduced in Section~\ref{sec:models} and its limiting behavior is discussed in Section~\ref{sec:analysis}. Parameter estimation for the networked SIR and SEIR models is discussed in Section~\ref{sec:parameters} and demonstrated in Section~\ref{sec:sim}.

\subsection{Notation}

Given a vector $x$, the transpose is indicated by $x^\top$, $\bar{x}$ is the average of its entries, and $\diag(\cdot)$ 
is  
a diagonal matrix with the argument on the diagonal. We use $\0$ and $\1$ to denote a vector or matrix of zeros and ones, respectively, of the appropriate dimensions. We define a directed graph $\mathcal{G} = (\mathcal{V},\mathcal{E},w)$, where $\mathcal{V}$ is the set of nodes, $ \mathcal{E} \subseteq \mathcal{V}\times \mathcal{V} $ is the set of edges, and $w : \mathcal{E} \rightarrow \mathbb{R}^+$ is a function
mapping directed edges to their weightings, with $\mathbb{R}^+$ being the set of positive real values.
Given $\mathcal{G}$, we denote an edge from
node $i \in \mathcal{V}$ to node $j\in \mathcal{V}$ by $(i,j)$. We say node $i \in \mathcal{V}$ is a neighbor of node $j \in \mathcal{V}$ if and only if $(i,j) \in \mathcal{E}$, and denote the neighbors of node $j$ as $\mathcal{N}_j$. We denote the weighted adjacency matrix associated with $\mathcal{G}$ as $A$ with the nonzero entry $a_{ji}$ indicating the strength of edge~$(i,j)$ as given by~$w$. 

\section{Networked SIR \& SEIR Models}\label{sec:models}

Here we introduce the discrete-time networked SIR and SEIR models. In each case we assume that the virus spreads over $\mathcal{G} = (\mathcal{V},\mathcal{E},w)$ with adjacency matrix $A$. 
Each node in $\mathcal{V}$ can be interpreted as a single individual or a subpopulation and the states are interpreted as probabilities or proportions, respectively, referred to as levels from here on out. 


\subsection{Networked SIR Model}\label{subsec:nSIR}


For an SIR process spreading over 
$\mathcal{G}$, node $i$'s levels of susceptibility $ s^{k}_i$, infection $ p^{k}_i$,  and recovery $ r^{k}_i$ evolve as 
\begin{subequations}\label{eq:nsir}
\begin{align}
    s_i^{k+1} & = s_i^k - h s_i^k \Bigg( \beta_i\sum_{j\in \mathcal{N}_i} a_{ij} p_j^k 
    \Bigg), \\
    p^{k+1}_i &= p^{k}_i + h\Bigg(s_i^k 
    \beta_i \sum_{j\in \mathcal{N}_i}a_{ij} p_j^k
    - \gamma_i p_i^k \Bigg), \label{eq:pidiscrete} \\
    r^{k+1}_i &= r^{k}_i + h \gamma_i p_i^k, \label{eq:ridiscrete}
\end{align}
\end{subequations}
%
where $k$ is the time step, $h$ is the sampling parameter, and  $\beta_i$ 
and $\gamma_i$ 
are 
node $i$'s infection and recovery parameters, respectively. The SIR discrete-time model can be expressed in matrix form as follows
\begin{subequations}
\begin{align}
    p^{k+1} &= p^{k} + h \left( (I - P^k - R^k) 
    B
    A 
    - \gamma \right) p^k, \label{eq:pdiscrete} \\
    r^{k+1} &= r^{k} + h \gamma p^k , \label{eq:rdiscrete}
\end{align}
\end{subequations}
%
where $P^k = \diag(p_i^k)$, $R^k = \diag(r_i^k)$, $B = \diag(\beta_i)$,  and $\gamma = \diag(\gamma_i^k)$. 
For this model to be well-defined we need the following assumptions.
\begin{assumption}\label{assm:0}
For all $i \in [n]$, we have $0<h \gamma_i < 1$ and $h \sum_{j\in \mathcal{N}_i} \beta_ia_{ij} < 1$. 
\end{assumption}
\begin{lemma} {\cite{sir_closed_loop}}
Consider the model in \eqref{eq:nsir} under Assumption \ref{assm:0}. Suppose $s_i^0, p_i^0,$ $r_i^0 \in [0,1]$ and $s_i^0 + p_i^0 + r_i^0 = 1$ for all $i \in [n] $. Then, for all $k \geq 0$ and $i \in [n]$, $s_i^k, p_i^k, r_i^k \in [0,1]$ and $s_i^k + p_i^k + r_i^k = 1$.
\end{lemma}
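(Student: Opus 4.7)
The plan is a straightforward induction on the time index $k$, with the inductive step split into (i) preservation of the simplex sum $s_i^k + p_i^k + r_i^k = 1$ and (ii) nonnegativity of each component; the upper bound $\leq 1$ then comes for free from (i) and (ii).

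The base case $k=0$ is given by hypothesis. For the inductive step, assume the claim at step $k$. To handle the sum, I would simply add the three update equations in \eqref{eq:nsir}: the infection term $h s_i^k \beta_i \sum_{j\in\mathcal{N}_i} a_{ij} p_j^k$ that leaves $s_i$ reappears with the opposite sign in the $p_i$ update, and the recovery term $h\gamma_i p_i^k$ that leaves $p_i$ reappears with the opposite sign in the $r_i$ update, so $s_i^{k+1}+p_i^{k+1}+r_i^{k+1}=s_i^k+p_i^k+r_i^k=1$.

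For nonnegativity I would treat the three states separately. First, $r_i^{k+1}=r_i^k+h\gamma_i p_i^k\geq 0$ is immediate since each factor is nonnegative. Next, rewrite the $s_i$ update as
\[
s_i^{k+1} = s_i^k\Bigl(1-h\beta_i\sum_{j\in\mathcal{N}_i}a_{ij}p_j^k\Bigr);
\]
using $p_j^k\in[0,1]$ from the inductive hypothesis together with $h\sum_{j\in\mathcal{N}_i}\beta_i a_{ij}<1$ from Assumption~\ref{assm:0}, the parenthesized factor lies in $(0,1]$, and so $s_i^{k+1}\geq 0$. Finally, rewrite
\[
p_i^{k+1} = (1-h\gamma_i)p_i^k + h s_i^k\beta_i\sum_{j\in\mathcal{N}_i}a_{ij}p_j^k;
\]
the first summand is nonnegative because $0<h\gamma_i<1$ (again from Assumption~\ref{assm:0}) and $p_i^k\geq 0$, and the second summand is a product of nonnegative quantities, so $p_i^{k+1}\geq 0$.

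Combining the two parts closes the induction: three nonnegative numbers that sum to $1$ must each lie in $[0,1]$, which yields $s_i^{k+1},p_i^{k+1},r_i^{k+1}\in[0,1]$. The main (only) subtlety is lining up the two inequalities of Assumption~\ref{assm:0} with the right terms—the $h\gamma_i<1$ condition is what keeps $p_i^{k+1}$ nonnegative, while the $h\sum_j\beta_i a_{ij}<1$ condition is what keeps $s_i^{k+1}$ nonnegative—and noticing that the $p_j^k\leq 1$ bound (rather than just $p_j^k\geq 0$) is exactly what is needed to invoke the latter.
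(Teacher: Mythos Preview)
Your proof is correct. Note that the paper does not actually supply its own proof of this lemma; it is quoted from \cite{sir_closed_loop}. Your induction argument mirrors the approach the paper takes for the SEIR analogue (Lemma~\ref{lem:seir}): check nonnegativity of each compartment using the two bounds in Assumption~\ref{assm:0}, and invoke the conservation identity. If anything, your organization is slightly tidier than the paper's SEIR proof, since you first establish $s_i^{k+1}+p_i^{k+1}+r_i^{k+1}=1$ by telescoping and then get all three upper bounds for free, whereas the paper bounds each state above individually.
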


\subsection{Networked SEIR Model}\label{sec:nSEIR}

For the  discrete-time SEIR model spreading over 
$\mathcal{G}$ 
with $s_i^k = 1 -e_i^k- p_i^k -r_i^k$, 
the exposed level of  $e^{k}_i$ and other states evolve as
\begin{subequations}\label{eq:nseir}
\begin{align}
    s_i^{k+1} & = s_i^k - h s_i^k \iota^k_i
    , \label{eq:seir_si} \\
    e^{k+1}_i &= e^{k}_i + h s_i^k 
    \iota^k_i
    - h\sigma_i e_i^k 
    , \label{eq:seir_ei} \\
    p^{k+1}_i &= p^{k}_i + h(\sigma_i e_i^k - \gamma_i p_i^k), \label{eq:seir_pi} \\
    r^{k+1}_i &= r^{k}_i + h\left(\gamma_i p_i^k \right), \label{eq:seir_ri}
\end{align}
\end{subequations}
%
 where 
 \begin{equation}\label{eq:iota1}
     \iota^k_i = \beta^E_i\sum_{j\in \mathcal{N}_i} a_{ij} e_j^k + \beta_i\sum_{j\in \mathcal{N}_i} a_{ij} p_j^k, 
 \end{equation}
 $\beta^E_i$ is infection parameter 
 associated with the $e^k_i$ state, $\sigma_i$ captures the rate at which the exposed become 
 confirmed infected cases. 

For transportation model we define
\begin{equation}\label{eq:iota2}
     \check{\iota}^k_i =  \iota^k_i+
     \sum_{l\in\mathcal{L}}\left(\check{\beta}^{E,l}_i\sum_{j\in \mathcal{N}_i} \check{a}^l_{ij} e_j^k + \check{\beta}^l_i\sum_{j\in \mathcal{N}_i} \check{a}^l_{ij} p_j^k\right), 
 \end{equation}
 where $\mathcal{L}$ is the set of transportation networks, $\check{a}^l_{ij}$ is the  transportation network, and $\check{\beta}^{E,l}_i$ and $\check{\beta}^l_i$ are the corresponding infection rates, for the $l$th transportation network.
 
 The SEIR discrete-time model can also be expressed in matrix form as follows
 \begin{subequations}\label{eq:seir_dis_matrix}
\begin{align}
    e^{k+1} &= e^{k} + h \left( 
    S ^k
    (B^E A e^k + BA p^k) 
    - \sigma e^k \right) , \label{eq:ediscrete} \\
    p^{k+1} &= p^{k} + h \left( \sigma e^k - \gamma p^k \right) , \label{eq:pdiscrete_seir} \\
    r^{k+1} &= r^{k} + h \left( \gamma p^k  \right), \label{eq:rdiscrete_seir}
\end{align}
\end{subequations}
%
where $S^k = \diag(s_i^k)$, 
$B^E = {\rm diag}(\beta^E_i)$, 
and $\sigma = {\rm diag}(\sigma_i)$.

For the discrete-time SEIR model to be well-defined we need the following assumptions.
\begin{assumption}\label{assm:seir}
For all $i \in [n]$, we have $0 < h \gamma_i < 1$, $0 < h\sigma_i \leq 1$, and $0 \leq h(\beta^E_i + \beta_i)\sum_{j\in \mathcal{N}_i}
a_{ij} < 1$. $\beta^E_i, \beta_i,a_{ij} \geq 0$ for all $i,j \in [n]$. 
\end{assumption}
%
%


\begin{lemma} \label{lem:seir}
Consider the model in \eqref{eq:nseir}-\eqref{eq:iota1} under Assumption \ref{assm:seir}. Suppose $s_i^0, e_i^0, p_i^0,$ $r_i^0 \in [0,1]$, $s_i^0 + e_i^0 +  p_i^0 + r_i^0 = 1$ for all $i \in [n] $. 
Then, for all $k \geq 0$ and $i \in [n]$, 
$s_i^k, e_i^k, p_i^k, r_i^k \in [0, 1]$ and $s_i^k + e_i^k + p_i^k + r_i^k = 1$. 
\end{lemma}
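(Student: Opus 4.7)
The plan is induction on the time index $k$, with the base case given directly by the hypothesis on initial conditions. For the inductive step, I assume $s_i^k, e_i^k, p_i^k, r_i^k \in [0,1]$ with $s_i^k + e_i^k + p_i^k + r_i^k = 1$ for every $i\in[n]$, and I aim to show the same bounds and conservation law at time $k+1$.

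I would start by establishing the conservation law at $k+1$. Summing \eqref{eq:seir_si}--\eqref{eq:seir_ri}, the infection contributions $\pm h s_i^k \iota_i^k$, the transition contributions $\pm h \sigma_i e_i^k$, and the recovery contributions $\pm h \gamma_i p_i^k$ cancel in pairs, so $s_i^{k+1} + e_i^{k+1} + p_i^{k+1} + r_i^{k+1} = s_i^k + e_i^k + p_i^k + r_i^k = 1$. With the sum pinned to $1$, once I prove each of the four state variables is nonnegative at time $k+1$, the upper bound of $1$ on each of them is automatic.

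Nonnegativity is then checked state by state. The recovery update $r_i^{k+1} = r_i^k + h\gamma_i p_i^k$ is nonnegative since $r_i^k, \gamma_i, p_i^k \geq 0$. The $p$ update rewrites as $p_i^{k+1} = (1 - h\gamma_i)p_i^k + h\sigma_i e_i^k$, which is nonnegative because $h\gamma_i < 1$, $h\sigma_i \leq 1$, and $e_i^k, p_i^k \geq 0$ by the inductive hypothesis. The $e$ update rewrites as $e_i^{k+1} = (1 - h\sigma_i)e_i^k + h s_i^k \iota_i^k$, which is nonnegative since $h\sigma_i \leq 1$, $s_i^k \geq 0$, and $\iota_i^k \geq 0$ (the latter because $\beta_i^E, \beta_i, a_{ij}, e_j^k, p_j^k$ are all nonnegative by hypothesis and Assumption \ref{assm:seir}).

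The only nontrivial step is showing $s_i^{k+1} = s_i^k(1 - h \iota_i^k) \geq 0$, which is where Assumption \ref{assm:seir} does the real work. Here I use the inductive conservation law to note that $e_j^k + p_j^k \leq 1$ for every neighbor $j$, which gives the bound
\begin{equation*}
\iota_i^k = \sum_{j\in\mathcal{N}_i} a_{ij}\bigl(\beta_i^E e_j^k + \beta_i p_j^k\bigr) \leq (\beta_i^E + \beta_i)\sum_{j\in\mathcal{N}_i} a_{ij}.
\end{equation*}
Multiplying by $h$ and invoking Assumption \ref{assm:seir} yields $h\iota_i^k < 1$, so $1 - h\iota_i^k > 0$ and $s_i^{k+1} \geq 0$. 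This is the step I expect to be the main obstacle, since it is the only place where the interaction between the graph structure, the two infection rates $\beta_i^E, \beta_i$, and the sampling parameter $h$ comes into play; all the other bounds follow from straightforward sign considerations on the individual update terms. Combining the four nonnegativity results with the already-established conservation law closes the induction.
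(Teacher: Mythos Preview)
Your proof is correct and follows essentially the same inductive strategy as the paper, with the same key use of Assumption~\ref{assm:seir} to bound $h\iota_i^k<1$. The only organizational difference is that you establish the conservation law $s_i^{k+1}+e_i^{k+1}+p_i^{k+1}+r_i^{k+1}=1$ first and then deduce the upper bounds from nonnegativity, whereas the paper checks upper and lower bounds on each variable separately; your version is marginally cleaner but not substantively different.
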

\begin{proof}
We prove this result by induction. 
By assumption, it holds for the base-case $k=0$. We follow the proof by showing the induction-step, that is, assume $s_i^k, e_i^k, p_i^k, r_i^k \in [0, 1]$ and $s_i^k + e_i^k + p_i^k + r_i^k = 1$, for all $i \in [n]$, and we now show that this holds also for time-step $k+1$.
By Assumption \ref{assm:seir} and \eqref{eq:seir_si}, $s^{k+1} \geq s_i^k + h\left[-s_i^k (\beta_i^E + \beta_i) \sum_{j\in \mathcal{N}_i}a_{ij}\right] = s_i^k \left[1 - h(\beta_i^E + \beta_i) \sum_{j\in \mathcal{N}_i}a_{ij}\right] \geq 0$. 
We have $s^{k+1} \leq s^k \leq 1$ since $
h\left[- s_i^k \left( \sum_{j\in \mathcal{N}_i} \beta^E_ia_{ij} e_j^k + \sum_{j\in \mathcal{N}_i} \beta_ia_{ij} p_j^k \right) \right] \leq 0$.
By Assumption \ref{assm:seir} and \eqref{eq:seir_ei}, $e_i^{k+1} \geq (1 - h\sigma_i) e_i^k \geq 0$. Moreover, by the assumption $e_j^k, p_j^k \leq 1$ for all $j \in [n]$, Assumption \ref{assm:seir}, and \eqref{eq:seir_ei}, $e_i^{k+1} \leq e_i^k + s_i^k h (\beta^E_i + \beta_i) \sum_{j\in \mathcal{N}_i}a_{ij} \leq e_i^k + s_i^k \leq 1$.
By Assumption \ref{assm:seir} and \eqref{eq:seir_pi}, $p_i^{k+1} \geq \left(1 - h\gamma_i \right) p_i^k \geq 0$ and $p_i^{k+1} \leq p_i^k + h\sigma_i e_i^k \leq p_i^k + e_i^k \leq 1$.
By Assumption \ref{assm:seir} and \eqref{eq:seir_ri}, $r_i^{k+1} \geq r_i^k \geq 0$, and $r_i^{k+1} \leq r_i^k + p_i^k$.

Thus, by the principle of mathematical induction we have that, if $s_i^0, e_i^0, p_i^0, r_i^0 \in [0, 1]$ and $s_i^0 + e_i^0 + p_i^0 + r_i^0 = 1$ for all $i \in [n]$ then  $s_i^k, e_i^k, p_i^k, r_i^k \in [0, 1]$ and $s_i^k + e_i^k + p_i^k + r_i^k = 1$ for all $k \in \mathbb{N}$.
\end{proof}


\section{Analysis of Models}\label{sec:analysis}

\vspace{-1ex}

In this section we discuss the limiting behavior of the networked models from Section \ref{sec:models}.

\vspace{-1.5ex}

\subsection{SIR Model}

\vspace{-1ex}

For the matrix $I + h \diag(s^k) BA - h \gamma$, define its dominant eigenvalue as  $\lambda_{max}$. 
\begin{theorem} \cite{sir_closed_loop}\label{thm:sir_dis}
Consider the model in \eqref{eq:nsir} with Assumption \ref{assm:0}, $BA$ irreducible, $s_i^0 > 0$ for all  $i \in [n]$,
and $p_i^0 > 0$ for some~$i$. 
Then, for all  $i \in [n]$, 
\begin{enumerate}
\item[1)] 
$s_i^{k+1} \leq s_i^k$, for all $k \geq 0$,
\item[2)] $\lim_{k \to \infty} p_i^k = 0$, 
\item[3)] 
$\lambda_{max}$ is monotonically decreasing as a function of $k$, 
\item[4)] there exists $\bar{k}$ such that $
\lambda_{max}< 1$ for all $k \geq \bar{k}$, and
\item[5)] there exists $\bar{k}$, such that $p_i^k$ converges linearly to $0$ for all~$k \geq~\bar{k}$. 
\end{enumerate}
\end{theorem}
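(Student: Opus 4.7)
The plan is to prove the five items in order, since each builds on the previous ones. Item 1 is essentially immediate: from the $s$-update in \eqref{eq:nsir}, one writes $s_i^{k+1} - s_i^k = -h s_i^k \beta_i \sum_{j \in \mathcal{N}_i} a_{ij} p_j^k$, and the right-hand side is nonpositive because every factor is nonnegative by Assumption~\ref{assm:0} and the invariance of $[0,1]$ guaranteed by the lemma cited from \cite{sir_closed_loop}. I would then invoke monotone convergence: $\{s_i^k\}$ is nonincreasing and bounded below by $0$, so it converges to some limit $s_i^\infty \geq 0$; similarly, from \eqref{eq:ridiscrete}, $\{r_i^k\}$ is nondecreasing and bounded above by $1$, so it converges as well. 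Taking the limit in \eqref{eq:ridiscrete} gives $h\gamma_i p_i^k \to 0$, and since $h\gamma_i > 0$ this proves item 2.

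For item 3, I would notice that, under Assumption~\ref{assm:0}, the matrix $M(s^k) := I + h\,\mathrm{diag}(s^k)BA - h\gamma$ is nonnegative entrywise: its diagonal entries are at least $1 - h\gamma_i > 0$ plus a nonnegative term, and its off-diagonal entries are $h s_i^k \beta_i a_{ij} \geq 0$. Since $s_i^k$ is nonincreasing in $k$ by item~1 and the dependence of $M(s^k)$ on $s^k$ is componentwise nondecreasing, we have $M(s^{k+1}) \leq M(s^k)$ entrywise. The Perron--Frobenius theorem for nonnegative matrices then gives $\lambda_{\max}(M(s^{k+1})) \leq \lambda_{\max}(M(s^k))$, using irreducibility of $BA$ (inherited by $\mathrm{diag}(s^k)BA$ as long as $s^k > 0$, which follows from $s^0 > 0$ and Assumption~\ref{assm:0}).

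Item 4 is the main obstacle and the place where I expect to spend the most effort. The idea is to argue by contradiction: suppose $\lambda_{\max}(M(s^k)) \geq 1$ for all $k$. Since the sequence is monotonically decreasing by item~3 and bounded below, it converges to some $\lambda^\star \geq 1$, and $M(s^k) \to M(s^\infty)$ so by continuity of the spectral radius on nonnegative matrices $\lambda_{\max}(M(s^\infty)) = \lambda^\star \geq 1$. Using \eqref{eq:pdiscrete}, we can write $p^{k+1} = M(s^k) p^k$, so $p^{k+1} \geq M(s^\infty) p^k$ componentwise (since $s^k \geq s^\infty$). Iterating and taking the Perron left-eigenvector $v > 0$ of $M(s^\infty)$ associated with $\lambda^\star \geq 1$, the quantity $v^\top p^k$ does not decay to $0$, contradicting item~2. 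Hence some $\bar k$ must exist with $\lambda_{\max}(M(s^k)) < 1$ for all $k \geq \bar k$.

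Finally, for item 5, once $\lambda_{\max}(M(s^{\bar k})) < 1$, I would use the identity $p^{k+1} = M(s^k) p^k$ with $M(s^k) \leq M(s^{\bar k})$ for all $k \geq \bar k$ (by monotonicity from item~3) to dominate the trajectory by the linear iteration driven by the fixed matrix $M(s^{\bar k})$. Choosing any matrix norm for which $\|M(s^{\bar k})\| \leq \lambda_{\max}(M(s^{\bar k})) + \varepsilon < 1$ (e.g., a weighted $\infty$-norm built from the Perron eigenvector) yields $\|p^{k+1}\| \leq \rho \|p^k\|$ with $\rho < 1$, giving linear convergence of $p_i^k$ to $0$ for every~$i$.
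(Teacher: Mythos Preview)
The paper does not supply its own proof of this theorem; it is quoted from \cite{sir_closed_loop}. The natural point of comparison is the paper's proof of Theorem~\ref{prop:main}, the SEIR analogue, whose structure your proposal follows closely. Items 1, 3, and 5 match the paper's arguments essentially line for line; your treatment of item 2 via the monotone bounded sequence $\{r_i^k\}$ is in fact a touch cleaner than the paper's route through the convergence of $s^k$.

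The only substantive divergence is in item 4. The paper (for SEIR) splits into the cases $s^\infty = \0$ and $s^\infty \neq \0$, dispatches the first by inspecting the explicit limit matrix, and in the second invokes Lyapunov's indirect method for the subcase $\lambda^\star > 1$ together with a left-eigenvector monotonicity argument for $\lambda^\star = 1$. Your argument is more economical: it treats all subcases at once via the entrywise inequality $p^{k+1} = M(s^k)p^k \geq M(s^\infty)p^k$ and a projection onto a Perron left eigenvector $v$ of $M(s^\infty)$. One technical point you glossed over: you implicitly use that $v > 0$, which requires $M(s^\infty)$ to be irreducible and hence $s^\infty > 0$ componentwise; this need not hold. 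The fix is easy, though: a merely nonnegative Perron left eigenvector $v \geq 0$, $v \neq 0$, still exists for any nonnegative $M(s^\infty)$, and irreducibility of $BA$ together with $p_i^0 > 0$ for some $i$ forces $p^k > 0$ componentwise after finitely many steps, so $v^\top p^k > 0$ eventually and the nondecreasing-sequence contradiction goes through.
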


\vspace{-1.5ex}

\subsection{SEIR Model}


Let $\lambda^{M_k}_{max}$ be the dominant eigenvalue of $M_k$, where $M_k$ is defined as 
\begin{align}\label{eq:M}
    M_k = \begin{bmatrix} (I + h {\rm diag}(s^k)B^EA - h\sigma) & \displaystyle h{\rm diag}(s^k) B A \\ h \sigma & \displaystyle (I - h\gamma) \end{bmatrix}.
\end{align}
Observe that $z^{k+1}:=\begin{bmatrix}e^{k+1} \\ p^{k+1}\end{bmatrix} = M_k \begin{bmatrix}e^k \\ p^k \end{bmatrix}$.

\begin{theorem}\label{prop:main}
Consider the model in \eqref{eq:seir_si}-\eqref{eq:seir_ri} under Assumption \ref{assm:seir}. Suppose $s_i^0, e_i^0, p_i^0,$ $r_i^0 \in [0,1]$, $s_i^0 + e_i^0 +  p_i^0 + r_i^0 = 1$ for all $i \in [n] $, 
$BA$ is irreducible, $s_i^0 > 0$ for all  $i \in [n]$, and $p_i^0 > 0$ for some~$i$. 
Then, for all $k \geq 0$ and $i \in [n]$, 
\begin{enumerate}
\item[1)] $s_i^{k+1} \leq s_i^k$,
\item[2)] $\lim_{k \to \infty} e_i^k = 0$ and $\lim_{k \to \infty} p_i^k = 0$,
\item[3)] $\lambda^{M_k}_{max}$ is monotonically decreasing as a function of k,
\item[4)] there exist a $\bar{k}$ such that $\lambda^{M_k}_{max} < 1$ for all $k \geq \bar{k}$,
\item[5)] there exists $\bar{k}$, such that $p_i^k$ converges linearly to $0$ for all $k \geq \bar{k}$ and $i \in [n]$.
\end{enumerate}
\end{theorem}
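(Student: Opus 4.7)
The plan is to prove the five claims in order, by extending the SIR argument of Theorem~\ref{thm:sir_dis} to the augmented state $z^k$ whose dynamics are $z^{k+1} = M_k z^k$. Part~1 is immediate from \eqref{eq:seir_si}: under Lemma~\ref{lem:seir} and Assumption~\ref{assm:seir} every factor of $h s_i^k \iota_i^k$ is non-negative, so $s_i^{k+1} - s_i^k \leq 0$. Part~2 is handled by two telescoping sums. Since $r_i^k$ is non-decreasing and bounded above by $1$, it converges, so summing \eqref{eq:seir_ri} gives $\sum_k h\gamma_i p_i^k = r_i^\infty - r_i^0 < \infty$, which forces $p_i^k \to 0$. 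Summing \eqref{eq:seir_pi} then yields $h\sigma_i \sum_k e_i^k = (p_i^\infty - p_i^0) + h\gamma_i \sum_k p_i^k < \infty$, so $e_i^k \to 0$. For Part~3, I first check that $M_k$ is entrywise non-negative: Assumption~\ref{assm:seir} gives $I - h\sigma \geq 0$ and $I - h\gamma \geq 0$, and the $s^k$-weighted blocks are non-negative since $s^k \geq 0$. Part~1 then implies the $s^k$-dependent entries of $M_k$ are non-increasing in $k$ while the remaining entries are constant, so $M_{k+1} \leq M_k$ entrywise, and the Perron--Frobenius theorem yields $\lambda^{M_{k+1}}_{max} \leq \lambda^{M_k}_{max}$.

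The main obstacle is Part~4. I would argue by contradiction using the pointwise limit $M_\infty := \lim_k M_k$, which is well-defined because each $s_i^k$ is monotone and bounded, hence convergent to some $s_i^\infty \geq 0$. Suppose $\lambda^{M_k}_{max} \geq 1$ for every $k$; by Part~3 and continuity of the spectral radius, $\lambda^{M_\infty}_{max} \geq 1$. Let $u \geq 0$ be a left Perron eigenvector of $M_\infty$. Since $M_k \geq M_\infty$ entrywise,
\begin{equation*}
u^\top z^{k+1} = u^\top M_k z^k \geq u^\top M_\infty z^k = \lambda^{M_\infty}_{max}\, u^\top z^k \geq u^\top z^k,
\end{equation*}
so $u^\top z^k$ is non-decreasing, contradicting $z^k \to 0$ from Part~2 provided $u^\top z^0 > 0$. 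The delicate step is securing that strict positivity; here I would invoke irreducibility of $BA$ together with $s_i^0 > 0$ for all $i$ and $p_i^0 > 0$ for some $i$ to show that $M_\infty$ is irreducible on the coordinates supporting the dynamics, so that $u$ is strictly positive there and $u^\top z^0 > 0$.

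Part~5 then follows from Parts~3 and~4. Choosing $\bar{k}$ as in Part~4 so that $\bar{\rho} := \lambda^{M_{\bar k}}_{max} < 1$, monotonicity gives $\lambda^{M_k}_{max} \leq \bar{\rho}$ for all $k \geq \bar{k}$, and a standard time-varying linear-system estimate (for instance, via a common Lyapunov quadratic for the family $\{M_k\}_{k \geq \bar k}$, or by bounding $\|M_{k+j} \cdots M_{\bar k}\|$ using the uniform spectral-radius bound together with equivalence of matrix norms) delivers geometric decay of $\|z^k\|$, and therefore linear convergence of each $p_i^k$ to zero.
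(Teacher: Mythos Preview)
Your argument is correct and tracks the paper in Parts~1), 3), and 5), but departs from it in Parts~2) and 4). For Part~2), the paper first argues that $s^k$ converges (monotone, bounded below), hence its increments $-h\,\diag(s^k)\big[B^E A e^k + BA p^k\big]$ vanish, and then reads off $e^{k+1}-e^k \to -h\sigma e^k$ and $p^{k+1}-p^k \to -h\gamma p^k$ to conclude $e^k,p^k\to 0$. Your telescoping route---summability of $p_i^k$ from the convergence of $r_i^k$, then summability of $e_i^k$ from that of $p_i^k$---is shorter and avoids the implicit ``perturbed linear recursion'' step. For Part~4), the paper splits into the cases $s^\infty=\0$ (where $M_\infty$ is block lower triangular with eigenvalues $1-h\sigma_i,\,1-h\gamma_i<1$) and $s^\infty\neq\0$, and in the latter further separates $\lambda^{M^*}_{max}>1$ (handled by Lyapunov's indirect method) from $\lambda^{M^*}_{max}=1$ (handled by the left-eigenvector monotonicity trick). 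Your single contradiction via the left Perron vector of $M_\infty$ absorbs all three cases at once, which is cleaner. The point you flag as delicate---securing $u^\top z^0>0$---is genuine: $M_\infty$ need not be irreducible if some $s_i^\infty=0$, so ``$M_\infty$ irreducible on the coordinates supporting the dynamics'' does not quite land. The easy fix is to note that for every finite $k$ one has $s_i^k>0$, so each $M_k$ is irreducible with positive diagonal, hence primitive with the same support as $M_0$; thus $z^{k_0}>0$ entrywise for some finite $k_0$, and your monotonicity argument run from $k_0$ (with any nonzero $u\geq 0$) gives the contradiction. The paper's proof skates over exactly the same positivity issue.
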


\begin{proof}
We present the proof for each part of the theorem, starting with 1).

1) By Lemma \ref{lem:seir} and Assumption \ref{assm:seir}, we have that $h\left[-s_i^k \left(\sum_{j\in \mathcal{N}_i} \beta_i^E a_{ij}e_j^k + \sum_{j=1}^k \beta_i a_{ij} p_j^k \right) \right] \leq 0$ for all $i \in [n]$ and $k \geq 0$. Therefore, from \eqref{eq:seir_si}, we have $s_i^{k+1} \leq s_i^k$.

2) Since the rate of change of $s^k$, $-h{\rm diag}(s^k)\left[B^EA e^k + BAp^k\right]$, is non-positive for all $k \geq 0$ and $s^k$ is lower bounded by zero, by Lemma \ref{lem:seir}, we conclude that $\lim_{k \to \infty}s^k$ exists. Therefore,
%
\begin{equation}
    \lim_{k\to \infty} - h {\rm diag}(s^k) \left[B^EAe^k + BAp^k\right] = \0. 
\end{equation}
%
%
Therefore, $\lim_{k \to \infty} e^{k+1} - e^k = \lim_{k \to \infty} -h \sigma e^k$. Thus, by Assumption \ref{assm:seir}, $h\sigma_i > 0$ for all $i \in [n]$, $\lim_{k \to \infty}e_i^k = 0$ for all $i \in [n]$. 

Similarly, we show that $\lim_{k \to \infty} p_i^k = 0$ for all $i \in [n]$. We have that $\lim_{k \to \infty} p^{k+1} - p^{k} = \lim_{k \to \infty}h\left( \sigma e^k - \gamma p^k\right) = \lim_{k \to \infty} - h \gamma p^k$, where we used that $\lim_{k \to \infty} e^k = 0$. By assumption $h\gamma_i > 0$ for all $i \in [n]$, thus $\lim_{k 
\to \infty} p_i^k = 0$ for all $i \in [n]$. 

3) By 
assumption  $s_i^0 > 0$ for all  $i \in [n]$, and from the proof of Lemma \ref{lem:seir} we can see that $s_i^k > 0$ for all  $i \in [n]$, $k\geq 0$. 
Therefore, since
we have that 
$BA$ is irreducible, from \eqref{eq:M}, the matrix $M_k$ is irreducible, and non-negative by Assumption \ref{assm:seir}, for all finite $k$.
Thus by the Perron-Frobenius Theorem for irreducible non-negative matrices we have that $\lambda^{M_k}_{max} = \rho(M_k)$. Since $\rho(M_k)$ increases when any entry increases \cite[Theorem 2.7]{varga} and by 1) of this theorem, we have that $\rho(M_k) \geq \rho(M_{k+1})$, that is $\lambda^{M_k}_{max} \geq \lambda^{M_{k+1}}_{max}$.

4) There are two possible equilibria: i) $\lim_{k \to \infty} s^k = \0 
$, and ii) $\lim_{k \to \infty} s^k = s^* \neq \0 
$. We explore the two cases separately.

i) If $\lim_{k \to \infty} s^k = \0 
$, 
\begin{equation*}
    \hspace{21ex} \lim_{k \to \infty} M^k = 
    \begin{bmatrix}  
        I-h\sigma & 0 \\ h\sigma & I-h\gamma
    \end{bmatrix}.
\end{equation*}
%
Therefore, by Assumption \ref{assm:seir},
there exists a $\bar{k}$ such that $\lambda^{M_k}_{max} < 1$ for all $k \geq \bar{k}$. 

ii) If $\lim_{k \to \infty} s^k = s^* \neq \0 
$, then, by 2), for any $\left(s^0, e^0, p^0, r^0\right)$ the system converges to some equilibrium of the form $\left(s^*, \0, \0, \1 - s^*\right)$. Define
%
\begin{equation}
    \epsilon_s^k := s^k - s^* \text{ and } \epsilon_p^k := 
    z^k
    - \0_{2n}. \label{eq:eps_def}
\end{equation}
%
By 1) and Lemma \ref{lem:seir}, respectively we know that $\epsilon_s^k \geq \0_n$ and $\epsilon_p^k \geq \0_{2n}$ for all $k \geq 0$. Furthermore, we know that $\epsilon_s^{k+1} \leq \epsilon_s^k$ for all $k \geq 0$, $\lim_{k \to \infty} \epsilon_s^k = \0_n$, and $\lim_{k \to \infty} \epsilon_p^k = \0_{2n}$. where the last equality comes from 2).

Linearizing the dynamics of $\epsilon_s^k$ and $\epsilon_p^k$ around $\left(s^*, \0_{2n}\right)$ gives
%
\begin{subequations} \label{eq:eps_sys}
\begin{eqnarray}
    &&\epsilon_s^{k+1} = \epsilon_s^k - h{\rm diag}(s^*)\begin{bmatrix}B^E A & B A \end{bmatrix}\epsilon_p^k, \\
    &&\epsilon_p^{k+1} = M_k \epsilon_p^k . \label{eq:eps_x}
\end{eqnarray}
\end{subequations}

Let $\lambda^{M^*}_{max}$ be the maximum eigenvalue of
%
\begin{equation}
    M^* = \begin{bmatrix} (I + h {\rm diag}(s^*)B^EA - h\sigma) & \displaystyle h{\rm diag}(s^*) B A \\ h \sigma & \displaystyle (I - h\gamma)  \end{bmatrix}
\end{equation}
%
with corresponding normalized eigenvector $w^*$, that is,
%
\begin{equation}
    {w^*}^\top M^* = \lambda^{M^*}_{max} {w^*}^\top.
\end{equation}

If $\lambda^{M^*}_{max} > 1$, then the system in \eqref{eq:eps_sys} is unstable. Therefore, by Lyapunov's Indirect Method, $\lim_{k \to \infty} \left(\epsilon_s^k, \epsilon_p^k \right) \neq \left( s^*, \0_{2n}\right)$, which is a contradiction.

Now consider the case where $\lambda^{M^*}_{max} = 1$. Define 

\begin{equation}
    \tilde{M}_k = \begin{bmatrix} h {\rm diag}(\epsilon_s^k)B^EA & h {\rm diag}(\epsilon_s^k)BA \\ 0 & 0 \end{bmatrix}.
\end{equation}
%
%
Then we can write $M_k = M^* + \tilde{M}_k$, observe that all entries in $\tilde{M}$ are non-negative.
Using \eqref{eq:eps_def} and left multiplying the equation of $\epsilon_p^{k+1}$ in \eqref{eq:eps_x} by ${w^*}^\top$ we get
%
\begin{align*}
    {w^*}^\top \epsilon_p^{k+1} 
    &= {w^*}^\top M \epsilon_p^k \\
    &= \lambda^{M^*}_{max} {w^*}^\top \epsilon_p^k + {w^*}^\top \tilde{M}_k \epsilon_p^k \\
    &= {w^*}^\top \epsilon_p^k + {w^*}^\top \tilde{M}_k \epsilon_p^k.
\end{align*}
%
Thus, 
%
\begin{equation}
    {w^*}^\top \left( \epsilon_p^{k+1} - \epsilon_p^k \right) = {w^*}^\top \tilde{M}_k \epsilon_p^k \geq 0,
\end{equation}
%
where the last inequality holds since all elements are non-negative. This contradicts that $\lim_{k \to \infty} 
z^k
= \0_{2n}$, that is 2). Therefore, there exists a $\bar{k}$ such that $\lambda^{M_k}_{max} < 1$ for all $k \geq \bar{k}$.

5) Since, by 4), there exists a $\bar{k}$ such that $\lambda^{M_k}_{max} < 1$ for all $k \geq \bar{k}$, and we know that $\lambda^{M_k}_{max} = \rho(M_k) \geq 0$ by Assumption \ref{assm:seir}, we have
%
\begin{equation}
    \lim_{k \to \infty} \frac{\| p^{k+1} \|}{\|p^k\|} = \frac{\|M_k p^k\|}{\|p^k\|} = \lambda^{M_k}_{max} < 1.
\end{equation}
%
Therefore, for $k \geq \bar{k}$, $p^k$ converges linearly to $\0_n$.
\end{proof}



\section{Estimating Model Parameters}\label{sec:parameters}


In this section we discuss estimating the parameters of the networked models from Section \ref{sec:models} using data. 
Note that these results are similar to the ones for the networked SIS model in \cite{dtjournal,ciss}. 


\subsection{Networked SIR Model}



We first explore conditions for estimating the model parameters for the homogeneous model, that is, where every node has the same infection and healing parameters. In order to do so we define the following matrices:
%
%
%
\begin{align}
    \Phi &= \begin{bmatrix} \displaystyle h
    S^{0}
    A p^{0}  & \displaystyle -hp^0 \\ \vdots & \vdots \\ 
    \undermat{\mathbf{a}}{\displaystyle h
    S^{T-1}
    A p^{T-1}}  & \undermat{-\mathbf{b}}{\displaystyle -hp^{T-1}} \end{bmatrix}, \text{ and} \label{eq:idPhi}
    \\[12pt]
    \Gamma &= \begin{bmatrix} 
    \0 &
    \mathbf{b}
    \end{bmatrix} \label{eq:idGamma}.
\end{align}
%
\noindent Using the above matrices we construct
$
    Q = \begin{bmatrix} \displaystyle \Phi \\ \displaystyle \Gamma \end{bmatrix}
    $
and, write \eqref{eq:pidiscrete} and \eqref{eq:ridiscrete} as 
\begin{equation}\label{eq:id1}
\hspace{3ex}    
\underbrace{\begin{bmatrix} \displaystyle p^1-p^0\\ \vdots \\ \displaystyle p^T-p^{T-1} \\ \displaystyle r^1 - r^0 \\ \vdots \\ \displaystyle r^T - r^{T-1} \end{bmatrix}}_{\Delta} =  Q \begin{bmatrix} \beta \\ \gamma \end{bmatrix}.
\end{equation}
%
\noindent We find the least squares estimates $\hat{\beta}$ and $\hat{\gamma}$ by employing the pseudoinverse of $Q$.
\begin{theorem} \label{thm:idsir}
Consider the model in \eqref{eq:nsir} with homogeneous virus spread, that is, $\beta$ and $\gamma$ are the same for all $n$ nodes. Assume that $s^k, p^k, r^k$, for all $k \in [T] \cup \{0\}$, and $h$ are known, with $n>0$. 
Then, the parameters of the spreading process can be identified uniquely if and only if $T > 0$, and there exist $i_1, i_2 \in [n]$ and $k_1, k_2 \in [T-1] \cup \{0\}$ such that
\begin{subequations} \label{eq:inv_sir}
\begin{align} 
 p_{i_1}^{k_1} &\neq 0, \label{eq:inv_gi_sir_homo} \\
 \left(S^{k_2} A p^{k_2} \right)_{i_2} &\neq 0.
\label{eq:inv_bi_sir_homo}
\end{align}
\end{subequations}
\end{theorem}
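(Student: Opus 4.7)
The plan is to recognize that \eqref{eq:id1} is a linear system $\Delta = Q[\beta\;\gamma]^\top$ in the two unknown scalars $\beta$ and $\gamma$, so unique identifiability is equivalent to $Q$ having full column rank $2$. I would organize the argument around this single rank condition.

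First I would observe that if $T = 0$, then the matrices $\Phi$ and $\Gamma$ are empty, so $Q$ is empty and nothing can be identified; this gives the necessity of $T > 0$. Then, assuming $T > 0$, I would note the block structure
\begin{equation*}
Q = \begin{bmatrix} \mathbf{a} & -\mathbf{b} \\ \mathbf{0} & \mathbf{b} \end{bmatrix},
\end{equation*}
where $\mathbf{a} \in \mathbb{R}^{nT}$ is the stacking of the vectors $h S^{k} A p^{k}$ for $k = 0,\dots,T-1$ and $\mathbf{b} \in \mathbb{R}^{nT}$ is the stacking of $h p^{k}$ for the same range of $k$.

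Next I would carry out the rank analysis. For sufficiency, assume that conditions \eqref{eq:inv_gi_sir_homo} and \eqref{eq:inv_bi_sir_homo} both hold. Then $\mathbf{b} \neq \mathbf{0}$ and $\mathbf{a} \neq \mathbf{0}$. Suppose $\alpha_1[\mathbf{a}^\top\;\mathbf{0}^\top]^\top + \alpha_2[-\mathbf{b}^\top\;\mathbf{b}^\top]^\top = \mathbf{0}$. Looking at the lower block forces $\alpha_2 \mathbf{b} = \mathbf{0}$, hence $\alpha_2 = 0$ since $\mathbf{b} \neq \mathbf{0}$, and the upper block then gives $\alpha_1 \mathbf{a} = \mathbf{0}$, so $\alpha_1 = 0$. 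Thus the two columns of $Q$ are linearly independent, $Q$ has rank $2$, and $[\beta\;\gamma]^\top$ is uniquely recovered (by, e.g., the pseudoinverse).

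For necessity, I would argue the contrapositive: if either condition fails, then unique identification is impossible. If \eqref{eq:inv_gi_sir_homo} fails, every $p_i^k$ is zero on the observation window, so $\mathbf{b} = \mathbf{0}$ and the second column of $Q$ vanishes, so $\gamma$ is unconstrained. If \eqref{eq:inv_bi_sir_homo} fails, then $\mathbf{a} = \mathbf{0}$, so the first column of $Q$ vanishes, and $\beta$ is unconstrained. In either case $\mathrm{rank}(Q) < 2$ and the least-squares solution is not unique. The main obstacle is essentially bookkeeping — keeping the block structure of $Q$ and the equivalence between the componentwise statements \eqref{eq:inv_sir} and the vector conditions $\mathbf{a} \neq \mathbf{0}$, $\mathbf{b} \neq \mathbf{0}$ — but nothing beyond elementary linear algebra is required once $\Phi$ and $\Gamma$ are read off correctly from \eqref{eq:idPhi} and \eqref{eq:idGamma}.
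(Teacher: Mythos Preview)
Your proof is correct and follows essentially the same approach as the paper: both reduce identifiability to the condition $\mathrm{rank}(Q)=2$ and verify it using the block structure $Q=\begin{bmatrix}\mathbf{a}&-\mathbf{b}\\\mathbf{0}&\mathbf{b}\end{bmatrix}$, with the only cosmetic difference that the paper factors $Q=D\tilde{Q}$ with $D=\begin{bmatrix}I&-I\\\mathbf{0}&I\end{bmatrix}$ invertible and $\tilde{Q}=\begin{bmatrix}\mathbf{a}&\mathbf{0}\\\mathbf{0}&\mathbf{b}\end{bmatrix}$, whereas you argue column independence directly from the lower and upper blocks.
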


\begin{proof}
Using \eqref{eq:idPhi} and \eqref{eq:idGamma} we can write $Q$ as follows
%
%
\begin{align*}
    Q &= 
    \begin{bmatrix}
    \displaystyle \mathbf{a} & \displaystyle - \mathbf{b} \\
    \0 & \displaystyle \mathbf{b}
    \end{bmatrix}
    = \underbrace{\begin{bmatrix}
    \displaystyle I & \displaystyle -I \\
    \displaystyle \0 & \displaystyle I
    \end{bmatrix}}_{D}
    \underbrace{\begin{bmatrix}
    \displaystyle \mathbf{a} & \0 \\
     \0 & \mathbf{b}
    \end{bmatrix}}_{\tilde{Q}}.
\end{align*}
%
%
If the assumptions in \eqref{eq:inv_sir} hold, $\mathbf{a}$ and $\mathbf{b}$ each have at least one element that is nonzero, therefore $\tilde{Q}$ has full column rank. Clearly $D$ has full rank which implies that the rank for $Q$ is equal to the rank of $\tilde{Q}_i$ \cite{horn2012matrix}. Therefore, there exists a unique solution to \eqref{eq:id1} using the inverse or pseudoinverse.

If one of the assumptions in \eqref{eq:inv_sir} is not met, $Q$ will have a nontrivial nullspace. Therefore, in that case, \eqref{eq:id1} does not have a unique solution.
\end{proof}
Note that we do not need to know all $s^k, p^k, r^k$ but only the entries $s^k_j, p^k_j, r^k_j$, for $j\in \mathcal{N}_{i_1} \cup \mathcal{N}_{i_2} \cup \{i_1, i_2\}$ is sufficient, where $i_1,i_2$ satisfy \eqref{eq:inv_sir}. 



To estimate the spreading parameters for the discrete-time SIR model from Section \ref{subsec:nSIR} in the heterogeneous case, we form the following matrices:
%
%
\begin{align}
    \Phi_i &= \begin{bmatrix} \displaystyle h
    s_{i}^{0}
    \sum_{j\in \mathcal{N}_i} a_{ij}p_{j}^{0}  & \displaystyle -hp_{i}^0 \\ \vdots & \vdots \\ 
    \undermat{\mathbf{a}_i}{\displaystyle h
    s_{i}^{T-1}
    \sum_{j\in \mathcal{N}_i} a_{ij}p_{j}^{T-1}}  & \undermat{-\mathbf{b}_i}{\displaystyle -hp_{i}^{T-1}} \end{bmatrix}, \text{ and} \label{eq:idPhii}
    \\[12pt]
    \Gamma_i &= \begin{bmatrix} 
    \0 &
    \mathbf{b}_i
    \end{bmatrix} \label{eq:idGammai}.
\end{align}
%
Using the above matrices we construct
$
  Q_i = \begin{bmatrix} \displaystyle \Phi_i \\ \displaystyle \Gamma_i \end{bmatrix}
  $
%
%

\begin{equation}\label{eq:idi1}
\hspace{3ex}    \underbrace{\begin{bmatrix} \displaystyle p_{i}^1-p_{i}^0\\ \vdots \\ \displaystyle p_{i}^T-p_{i}^{T-1} \\ \displaystyle r_{i}^1 - r_{i}^0 \\ \vdots \\ \displaystyle r_{i}^T - r_{i}^{T-1} \end{bmatrix}}_{\Delta_i} =  Q_i \begin{bmatrix} \beta_{i} \\ \gamma_{i} \end{bmatrix}.
\end{equation}
%
For each $i$, we find the least squares estimates $\hat{\beta}_i$ and $\hat{\gamma}_i$ by using the pseudoinverse of $Q_{i}$. 

We now explore conditions for estimating the SEIR model parameters in the homogeneous case. In order to do so we define the following matrices:
\begin{align}
    \Phi^E &= \begin{bmatrix} \displaystyle h
    S^{0}
    A e^{0}  & \displaystyle h
    S^{0}
    A p^{0}  & \displaystyle -h {e}^0 & 
    \0 \\ \vdots & \vdots & \vdots & \vdots 
    \\
    \undermat{\mathbf{a}^E}{\displaystyle h
    S^{T-1} A e^{T-1}}  & \undermat{\mathbf{b}^E}{\displaystyle h
    S^{T-1}
    A p^{T-1}}  & \displaystyle -h{e}^{T-1} & \0 
    \end{bmatrix}, \label{eq:idPhi_SEIR}
    \\[15pt] 
    \Sigma^E &= \begin{bmatrix} \displaystyle 0 & \displaystyle 0 & \displaystyle h e^{0}  & \displaystyle -h p^0\\ \vdots & \vdots & \vdots & \vdots \\ \displaystyle 0 & \displaystyle 0 & \undermat{\mathbf{c}^E}{\displaystyle h e^{T-1}}  & \undermat{-\mathbf{d}^E}{\displaystyle -h p^{T-1}} \end{bmatrix}, \label{eq:idSigma_SEIR}\\ 
    \text{ and} \nonumber 
    \\
    \Gamma^E &= \begin{bmatrix}  
    \0 & \0 & \0 &
    \mathbf{d}^E
    \end{bmatrix} \label{eq:idGamma_SEIR}.
\end{align}
%
Using the above matrices we construct
%
$
    Q^E = \begin{bmatrix} \displaystyle \Phi^E \\ \displaystyle \Sigma^E \\ \displaystyle \Gamma^E \end{bmatrix}
    $
%
and
write \eqref{eq:nseir}-\eqref{eq:iota1} as 
%
\begin{equation}\label{eq:id2}
\hspace{3ex}
    \begin{bmatrix} 
    \displaystyle e^1-e^0\\ \vdots \\ \displaystyle e^T-e^{T-1} \\ 
    \displaystyle p^1-p^0\\ \vdots \\ \displaystyle p^T-p^{T-1} 
    \\ 
    \displaystyle r^1 - r^0 \\ \vdots \\ \displaystyle r^T - r^{T-1} 
    \end{bmatrix} =  Q^E \begin{bmatrix} \beta^E \\ \beta \\ \sigma \\ \gamma \end{bmatrix}. 
\end{equation}
%
We find the least squares estimates $\hat{\beta}^E$, $\hat{\beta}$, $\hat{\sigma}$, and $\hat{\gamma}$ using the pseudoinverse of $Q^E$.

\begin{theorem} \label{thm:idseir}
Consider the model in \eqref{eq:nseir}-\eqref{eq:iota1} with homogeneous virus spread, that is, $\beta^E$, $\beta$, $\sigma$, and $\gamma$ are the same for all $n$ nodes.
Assume that $s^{k},e^{k},p^{k},r^{k}$, for all $k\in[T] \cup \{0\}$, and $h$ are known, with $n > 1$. Then, the parameters of the spreading process  can be identified uniquely if and only if $T>0$, and
there exist $i_1, i_2, i_3, i_4 \in [n]$ and $ k_1,k_2,k_3,k_4\in[T-1] \cup \{0\}$ such that
\begin{subequations} \label{eq:inv_seir}
\begin{align} 
& p_{i_1}^{k_1} \neq 0, \label{eq:inv_g}
e_{i_2}^{k_2} \neq 0, 
\\
& g^{k_3}_{i_3}(e^{k_3}) g^{k_4}_{i_4}(p^{k_4}) \neq g^{k_4}_{i_4}(e^{k_4}) g^{k_3}_{i_3}(p^{k_3}),
\label{eq:inv_b}
\end{align}
\end{subequations}
%
where $g^k_i(x) = s_i^{k} \sum_{j\in \mathcal{N}_i} a_{ij}x_{j}$.
\end{theorem}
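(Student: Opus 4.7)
My plan is to parallel the SIR identifiability proof of Theorem~\ref{thm:idsir}: factor $Q^E$ as an invertible matrix times a block-diagonal matrix, so that its column rank decouples into the ranks of three independently analyzable blocks, each aligned with one of the conditions in \eqref{eq:inv_seir}. From \eqref{eq:idPhi_SEIR}--\eqref{eq:idGamma_SEIR} the sparsity pattern is
\[ Q^E = \begin{bmatrix} \mathbf{a}^E & \mathbf{b}^E & -\mathbf{c}^E & \0 \\ \0 & \0 & \mathbf{c}^E & -\mathbf{d}^E \\ \0 & \0 & \0 & \mathbf{d}^E \end{bmatrix}, \]
since in the homogeneous SEIR equations only $\beta^E,\beta,\sigma$ contribute to the $e$-block, only $\sigma,\gamma$ to the $p$-block, and only $\gamma$ to the $r$-block. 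I would introduce the invertible upper-triangular matrix
\[ D = \begin{bmatrix} I & -I & \0 \\ \0 & I & -I \\ \0 & \0 & I \end{bmatrix} \]
(with $nT \times nT$ identity blocks) and check by direct multiplication that $Q^E = D \tilde{Q}^E$ with
\[ \tilde{Q}^E = \begin{bmatrix} \mathbf{a}^E & \mathbf{b}^E & \0 & \0 \\ \0 & \0 & \mathbf{c}^E & \0 \\ \0 & \0 & \0 & \mathbf{d}^E \end{bmatrix}. \]
Since $D$ is invertible, $\operatorname{rank}(Q^E) = \operatorname{rank}(\tilde{Q}^E)$, and \eqref{eq:id2} has a unique solution for $(\beta^E,\beta,\sigma,\gamma)$ iff $\tilde{Q}^E$ has full column rank $4$.

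Full column rank of $\tilde{Q}^E$ decomposes into three requirements on its diagonal blocks: $\mathbf{d}^E \neq \0$, $\mathbf{c}^E \neq \0$, and the two-column matrix $[\mathbf{a}^E \;\; \mathbf{b}^E]$ has rank $2$. Reading the definitions back, these become, respectively, $p_{i_1}^{k_1}\neq 0$ for some $(i_1,k_1)$; $e_{i_2}^{k_2}\neq 0$ for some $(i_2,k_2)$; and the existence of $(i_3,k_3),(i_4,k_4)$ whose corresponding $2\times 2$ minor of $[\mathbf{a}^E \;\; \mathbf{b}^E]$ is nonzero, i.e., $g^{k_3}_{i_3}(e^{k_3}) g^{k_4}_{i_4}(p^{k_4}) \neq g^{k_4}_{i_4}(e^{k_4}) g^{k_3}_{i_3}(p^{k_3})$, matching \eqref{eq:inv_g}--\eqref{eq:inv_b}. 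For the converse, if any one of the three fails, the corresponding diagonal block of $\tilde{Q}^E$ is rank-deficient; I can lift a nonzero vector in its kernel to a kernel element of $Q^E$ (supported on the $\gamma$-, $\sigma$-, or $(\beta^E,\beta)$-coordinates), giving a one-parameter family of solutions to \eqref{eq:id2}. The requirement $T>0$ is trivial, since otherwise $Q^E$ has no rows.

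The main obstacle will be the rank-$2$ step for $[\mathbf{a}^E \;\; \mathbf{b}^E]$. The ``$\Leftarrow$'' direction is immediate from a nonzero $2\times 2$ minor. For ``$\Rightarrow$'', if every such minor vanishes, then $\mathbf{a}^E$ and $\mathbf{b}^E$ are proportional (or one of them is identically zero), so the rank is at most $1$ and $\beta^E,\beta$ are not jointly identifiable. Some care is needed when one column is identically zero, in which case the corresponding infection parameter can be moved freely without changing $Q^E \begin{bmatrix}\beta^E & \beta & \sigma & \gamma\end{bmatrix}^\top$; this still produces a nontrivial nullspace and confirms non-identifiability. Aside from this minor-classification step, the proof is a direct extension of Theorem~\ref{thm:idsir} with one extra row-block and one extra parameter, using the same row-reduction template.
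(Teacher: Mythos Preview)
Your proposal is correct and mirrors the paper's proof almost exactly: the paper factors $Q^E = D^E \tilde{Q}^E$ with the same upper-triangular $D^E$ and the same block-diagonal $\tilde{Q}^E$, then reduces full column rank to the three conditions on $[\mathbf{a}^E\;\mathbf{b}^E]$, $\mathbf{c}^E$, and $\mathbf{d}^E$ just as you outline. Your treatment of the converse and the rank-$2$ minor argument is in fact slightly more explicit than the paper's, but the approach is identical.
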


\begin{proof}
Using \eqref{eq:idPhi_SEIR}-\eqref{eq:idGamma_SEIR}, we can write $Q^E$ as follows

\begin{equation}
    Q^E= 
    \underbrace{\begin{bmatrix}
    \displaystyle I & \displaystyle -I & \displaystyle \0_{_{nT \times nT}} \\ \0_{_{nT \times nT}} & \displaystyle I & \displaystyle -I \\ \0_{_{nT \times nT}} & \0_{_{nT \times nT}} & I 
    \end{bmatrix}}_{D^E} 
    \underbrace{\begin{bmatrix} \displaystyle \mathbf{a}^E  & \displaystyle \mathbf{b}^E  & \0 & 
    \0 \\ \0  & \0  & \displaystyle \mathbf{c}^E & \0 \\ \0  & \0  & \0 & \displaystyle \mathbf{d}^E
    \end{bmatrix}}_{\tilde{Q}^E}.
\end{equation}
%
Since $n>1$, $\tilde{\Phi}^E = \begin{bmatrix}\mathbf{a}^E & \mathbf{b}^E \end{bmatrix}$ has at least two rows, and given that \eqref{eq:inv_b} holds, $\tilde{\Phi}^E$ has column rank equal to two. Moreover, if \eqref{eq:inv_g} 
holds
$\mathbf{c}$ and $\mathbf{d}$ each have at least one element that is nonzero. Thus, $\tilde{Q}^E$ has full column rank. Clearly $D^E$ has full rank which implies that the rank of $Q^E$ is equal to the rank of $\tilde{Q}^E$ \cite{horn2012matrix}. Therefore, there exists a unique solution to \eqref{eq:id2} using the pseudoinverse.

If one of the assumptions in \eqref{eq:inv_g}-\eqref{eq:inv_b} is not met, $Q^E$ will have a nontrivial nullspace. Therefore, in that case, \eqref{eq:id2} does not have a unique solution.
\end{proof}
%
Similar to the SIR model and the heterogeneous case it is not necessary to  know all entries of 
$s^k, e^k, p^k, r^k$. 
It is sufficient to know only $s^k_j, e^k_j, p^k_j, r^k_j$, for $j \in \mathcal{N}_{i_1} \cup \mathcal{N}_{i_2} \cup \mathcal{N}_{i_3} \cup \mathcal{N}_{i_4} \cup \{i_1, i_2, i_3, i_4\}$, where $i_1,i_2,i_3,i_4$ satisfy~\eqref{eq:inv_seir}.


To estimate the spreading parameters for the discrete-time, heterogeneous SEIR model from Section \ref{sec:nSEIR} we define: 
%
%
\begin{align}
    \Phi^E_i &= \begin{bmatrix} \displaystyle h
    s_{i}^{0}
    \sum_{j\in \mathcal{N}_i} a_{ij}e_{j}^{0}  & \displaystyle h
    s_{i}^{0}
    \sum_{j\in \mathcal{N}_i} a_{ij}p_{j}^{0}  & \displaystyle -h {e}_{i}^0 & 
    0 \\ \vdots & \vdots & \vdots & \vdots 
    \\ 
    \undermat{\mathbf{a}^E_i}{\displaystyle h
    s_{i}^{T-1}\sum_{j\in \mathcal{N}_i} a_{ij}e_{j}^{T-1}}  & \undermat{\mathbf{b}^E_i}{\displaystyle h
    s_{i}^{T-1}
    \sum_{j\in \mathcal{N}_i} a_{ij}p_{j}^{T-1}}  & \displaystyle -h{e}_{i}^{T-1} & 0 
    \end{bmatrix}, \label{eq:idPhii_SEIR}
    \\[5pt] 
    \Sigma^E_i &= \begin{bmatrix} \displaystyle 0 & \displaystyle 0 & \displaystyle h e_{i}^{0}  & \displaystyle -h p_{i}^0\\ \vdots & \vdots & \vdots & \vdots \\ \displaystyle 0 & \displaystyle 0 & \undermat{\vspace{1ex}\mathbf{c}^E_i}{\displaystyle h e_{i}^{T-1}}  & \undermat{-\mathbf{d}^E_i}{\displaystyle -h p_{i}^{T-1}} \end{bmatrix}, \label{eq:idSigmai_SEIR}\\ 
    \text{ and} \nonumber
    \\[3pt]
    \Gamma^E_i &= \begin{bmatrix}  
    \0 & \0 & \0 & \mathbf{d}^E_i
\end{bmatrix} \label{eq:idGammai_SEIR}.
\end{align}
%
Using the above matrices we construct
%
$
    Q^E_i = \begin{bmatrix} \displaystyle \Phi^E_i \\ \displaystyle \Sigma^E_i \\ \displaystyle \Gamma^E_i \end{bmatrix}
    $
%
and
write \eqref{eq:nseir}-\eqref{eq:iota1} as 

\begin{equation}\label{eq:id2i}
\hspace{-3ex}
    \begin{bmatrix} \displaystyle e_{i}^1-e_{i}^0\\ \vdots \\ \displaystyle e_{i}^T-e_{i}^{T-1} \\ 
    \displaystyle p_{i}^1-p_{i}^0\\ \vdots \\ \displaystyle p_{i}^T-p_{i}^{T-1} \\ \displaystyle r_{i}^1 - r_{i}^0 \\ \vdots \\ \displaystyle r_{i}^T - r_{i}^{T-1} 
    \end{bmatrix} =  Q^E_i \begin{bmatrix} \beta^E_{i} \\ \beta_{i} \\ \sigma_i \\ \gamma_{i} \end{bmatrix}. 
\end{equation}
%
We find the least squares estimates $\hat{\beta}^E_{i}$, $\hat{\beta}_{i}$, $\hat{\sigma}_{i}$, and $\hat{\gamma}_{i}$ using the pseudoinverse of $Q^E_{i}$.

\begin{theorem} \label{thm:idiseir}
Consider the model in \eqref{eq:nseir}-\eqref{eq:iota1}.
Assume that $s_i^{k},e_j^{k},p_j^{k},r_i^{k}$, for all $ j\in \mathcal{N}_i \cup \{i\}, k\in[T-1] \cup \{0\}$, $e_i^{T},p_i^{T},r_i^{T}$, and $h$ are known. Then, the parameters of the spreading process for node $i$ can be identified uniquely if and only if $T>1$, and
there exist $ k_1,k_2,k_3,k_4\in[T-1] \cup \{0\}$ such that
\begin{subequations} \label{eq:inv_seiri}
\begin{align} 
& p_i^{k_1} \neq 0, \label{eq:inv_gi} 
e_i^{k_2} \neq 0, 
\\
& g^{k_3}_{i}(e^{k_3}) g^{k_4}_{i}(p^{k_4}) \neq g^{k_4}_{i}(e^{k_4}) g^{k_3}_{i}(p^{k_3}),
\label{eq:inv_bi}
\end{align}
\end{subequations}
%
where $g^k_i(x) = s_i^{k} \sum_{j\in \mathcal{N}_i} a_{ij}x_{j}$ which only uses the entries $x_j$ for which $j \in \mathcal{N}_i$.
\end{theorem}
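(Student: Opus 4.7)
The plan is to mirror the proof of Theorem~\ref{thm:idseir} closely, with the modification that the rank-$2$ argument for the infection columns now draws its two independent rows from two distinct time indices (forcing $T>1$) rather than from two distinct nodes (which was $n>1$ there). First I would factor $Q^E_i = D^E \tilde{Q}^E_i$, where $D^E$ is the same block upper-triangular elementary matrix as in the homogeneous proof, now with $T\times T$ identity blocks on the diagonal and $-I$ on the superdiagonal, and
\[
\tilde{Q}^E_i = \begin{bmatrix} \mathbf{a}^E_i & \mathbf{b}^E_i & \mathbf{0} & \mathbf{0} \\ \mathbf{0} & \mathbf{0} & \mathbf{c}^E_i & \mathbf{0} \\ \mathbf{0} & \mathbf{0} & \mathbf{0} & \mathbf{d}^E_i \end{bmatrix}.
\]
The factorization works because the $-h e_i^{\cdot}$ column of $\Phi^E_i$ is exactly $-\mathbf{c}^E_i$ and the $-h p_i^{\cdot}$ column of $\Sigma^E_i$ is exactly $-\mathbf{d}^E_i$, so the $-I$ superdiagonal blocks of $D^E$ cancel those entries. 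Since $D^E$ is invertible, $\mathrm{rank}(Q^E_i) = \mathrm{rank}(\tilde{Q}^E_i)$ by \cite{horn2012matrix}.

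Next I would analyze when $\tilde{Q}^E_i$ has full column rank $4$. Because the four columns split across disjoint row blocks as $\{c_1,c_2\}$, $\{c_3\}$, $\{c_4\}$, the total column rank decomposes as $\mathrm{rank}([\mathbf{a}^E_i\ \mathbf{b}^E_i]) + \mathrm{rank}(\mathbf{c}^E_i) + \mathrm{rank}(\mathbf{d}^E_i)$. The column $\mathbf{d}^E_i = h(p_i^0,\ldots,p_i^{T-1})^\top$ contributes rank $1$ iff some $p_i^{k_1}\neq 0$, and likewise $\mathbf{c}^E_i = h(e_i^0,\ldots,e_i^{T-1})^\top$ contributes rank $1$ iff some $e_i^{k_2}\neq 0$, which is \eqref{eq:inv_gi}. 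The $T\times 2$ matrix $[\mathbf{a}^E_i\ \mathbf{b}^E_i]$ has $k$-th row $h\bigl(g^k_i(e^k),\ g^k_i(p^k)\bigr)$ and so has rank $2$ iff some $2\times 2$ minor is nonzero, which is exactly \eqref{eq:inv_bi}; this condition presupposes $T\geq 2$, i.e.\ $T>1$. Combining, $\tilde{Q}^E_i$ has column rank $4$ iff $T>1$ and \eqref{eq:inv_seiri} holds, in which case \eqref{eq:id2i} admits a unique solution via the pseudoinverse.

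For the converse I would note that any failure of the hypotheses produces a nonzero element of the kernel of $\tilde{Q}^E_i$, hence of $Q^E_i$, and therefore non-uniqueness of $(\beta^E_i,\beta_i,\sigma_i,\gamma_i)$ in \eqref{eq:id2i}: either the third or fourth column of $\tilde{Q}^E_i$ is identically zero, or the first two columns are linearly dependent. The main obstacle I anticipate is packaging the rank-drop in the \eqref{eq:inv_bi} direction cleanly, since the vanishing of every $2\times 2$ minor of $[\mathbf{a}^E_i\ \mathbf{b}^E_i]$ admits degenerate subcases where one of the columns is identically zero (partially overlapping with a failure of \eqref{eq:inv_gi}, but mediated through the neighbor sums in $g^k_i$); I would handle proportional nonzero columns and identically-zero columns uniformly by exhibiting an explicit null vector of $\tilde{Q}^E_i$ supported on the first two coordinates in each case, then lifting through $D^E$ to get the required null vector of $Q^E_i$.
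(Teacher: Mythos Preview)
Your proposal is correct and follows essentially the same approach as the paper: the identical factorization $Q^E_i = D^E_i \tilde{Q}^E_i$ with the block upper-triangular $D^E_i$, the same block-diagonal rank decomposition of $\tilde{Q}^E_i$, and the same identification of \eqref{eq:inv_gi}--\eqref{eq:inv_bi} with the rank conditions on $\mathbf{c}^E_i$, $\mathbf{d}^E_i$, and $[\mathbf{a}^E_i\ \mathbf{b}^E_i]$. If anything, your treatment of the converse direction and of why $T>1$ is forced is more explicit than the paper's.
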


\begin{proof}
Using \eqref{eq:idPhii_SEIR}-\eqref{eq:idGammai_SEIR}, we can write $Q^E_i$ as follows
\begin{equation*}
    Q^E_i = \underbrace{\begin{bmatrix}
    \displaystyle I & \displaystyle -I & \displaystyle \0_{_{T \times T}} \\ \0_{_{T \times T}} & \displaystyle I & \displaystyle -I \\ \0_{_{T \times T}} & \0_{_{T \times T}} & I 
    \end{bmatrix}}_{D^E_i}
    \underbrace{\begin{bmatrix} \displaystyle \mathbf{a}^E_i  & \displaystyle \mathbf{b}^E_i  & \0 & 
    \0 \\ \0  & \0  & \displaystyle \mathbf{c}^E_i & \0 \\ \0  & \0  & \0 & \displaystyle \mathbf{d}^E_i
    \end{bmatrix}}_{\tilde{Q}^E_i}.
\end{equation*}
%
Since $T>1$, $\tilde{\Phi}_i^E = \begin{bmatrix}\mathbf{a}^E_i & \mathbf{b}^E_i \end{bmatrix}$ has at least two rows, and given that \eqref{eq:inv_bi} holds, $\tilde{\Phi}^E_i$ has column rank equal to two. Moreover, if \eqref{eq:inv_gi} 
holds, 
$\mathbf{c}$ and $\mathbf{d}$ each have at least one element that is nonzero. Thus, $\tilde{Q}^E_i$ has full column rank. Clearly $D^E_i$ has full rank which implies that the rank of $Q^E_i$ is equal to the rank of $\tilde{Q}^E_i$ \cite{horn2012matrix}. Therefore, there exists a unique solution to \eqref{eq:id2} using the pseudoinverse.

If one of the assumptions in \eqref{eq:inv_gi}-\eqref{eq:inv_bi} is not met, $Q_i^E$ will have a nontrivial nullspace. Therefore, in that case, \eqref{eq:id2i} does not have a unique solution.
\end{proof}











\section{Simulations}\label{sec:sim}



\begin{figure}
\centering
\begin{subfigure}{0.3\textwidth}
     \centering
    \begin{overpic}[
    width = 1\textwidth]{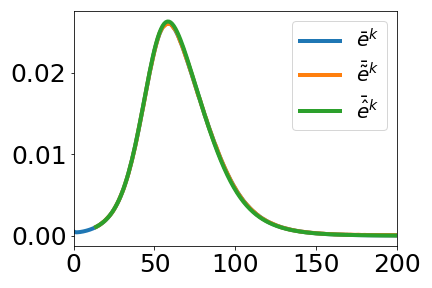}
            \put(52.5,-3){{\parbox{\linewidth}{%
                k}}}
        \end{overpic}
     \caption{ 
    Network
     average exposed state.}
    \label{fig:sim_e}
\end{subfigure}
\begin{subfigure}{0.3\textwidth}
    \centering
    \begin{overpic}[
    width = 1\textwidth]{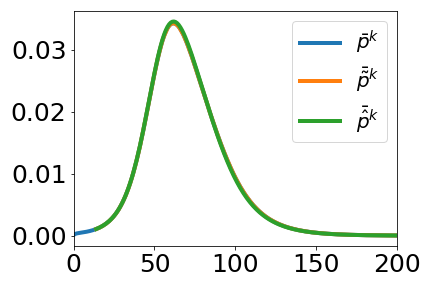}
            \put(52.5,-3){{\parbox{\linewidth}{%
                k}}}
        \end{overpic}
    \caption{ 
    Network 
    average infected state.}
    \label{fig:sim_p}
   \end{subfigure}
\begin{subfigure}{0.3\textwidth}
    \centering
        \begin{overpic}[width = 1\textwidth]{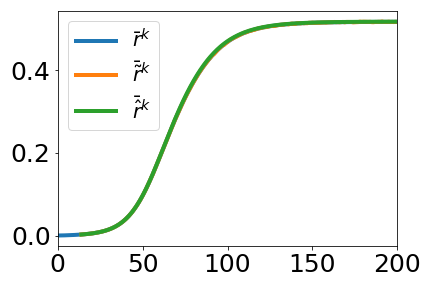}
            \put(51.5,-3){{\parbox{\linewidth}{%
                k}}}
        \end{overpic}
    \caption{ 
    Network 
    average removed state. 
     }
    \label{fig:sim_r}
   \end{subfigure}    
    \caption{Simulation of a homogeneous SEIR system with its measured states, and the recovered states 
    to show 
    how well the recovered states captures the 
    average state of the system.
    }
    \label{fig:sim_seir_avg}
\end{figure}



In this section we illustrate the analysis and parameter estimation results from Sections \ref{sec:analysis}-\ref{sec:parameters}.
For the adjacency matrix we use the nearest-neighbor network of counties in the northeast US with self-loops, namely, the counties in  Massachusetts~(MA), New Jersey~(NJ), Rhode Islands~(RI),  Connecticut~(CT), and New York~(NY), combining the five counties that make up New York City into one.


To simulate the states for the SEIR model we use \eqref{eq:nseir}-\eqref{eq:iota1}, with the following spread parameters $\left( \beta^{E}, \, \beta, \, \sigma, \, \gamma \right) = \left(0.04, \, 0.06, \, 0.4, \, 0.3 \right)$ and the initial state $e^0_1 = 0.02$, $e^0_2=0.03$, $p^0_1=0.01$, with the rest of the initial conditions for the non-susceptible states are set to zero for each node.
We correctly recover the spread parameters using \eqref{eq:id2} and $e^k$, $p^k$, and $r^k$ for $k \in \{0, 1\}$, as expected by Theorem \ref{thm:idseir}. 
Similarly, we correctly recover the spread parameters for the SIR model using  $\beta$, $\gamma$, $p^0$, and $r^0$ to simulate $p^1$ and $r^1$ and recovering the spread parameters using \eqref{eq:id1} and $p^k$ and $r^k$ for $k \in \{0, 1\}$.

We add measurement noise to evaluate the sensitivity of the estimation results and assume that the perturbation on $e$ is greater than that on $p$ and $r$ since it is the most difficult 
of the three states 
to measure. The measured states are $\tilde{e}$, $\tilde{p}$, and $\tilde{r}$,  determined by $\tilde{e}^k_i = e^k_i + \varepsilon_e(e^k_i)$ where $\varepsilon_e(x_i) \sim \mathcal{N}(0, 0.015x_i + 0.0001)$, $\tilde{p}^k_i = p^k_i + \varepsilon(p^k_i)$, and $\tilde{r}^k_i = r^k_i + \varepsilon(r^k_i)$ where $\varepsilon(x_i) \sim \mathcal{N}(0, 0.008x_i + 0.00001)$. In order to emulate the difficulty of measuring the states at the beginning of an outbreak, we start measuring from $k = 14$,
and recover the spread parameters by left multiplying \eqref{eq:id2} by the pseudo-inverse of $Q^E$. The estimated states $\hat{e}$, $\hat{p}$, and $\hat{r}$ are constructed using \eqref{eq:nseir}-\eqref{eq:iota1}, the first set of measured states $\tilde{e}^{14}$, $\tilde{p}^{14}$, and $\tilde{r}^{14}$, and the recovered spread parameters. In Figure \ref{fig:sim_seir_avg} we show how well the average states are recovered compared to the average of the actual states, $e$, $p$, and $r$ using the measured states to recover the spread parameters. The recovered spread parameters are $\left( \hat{\beta}^{E}, \, \hat{\beta}, \, \hat{\sigma}, \, \hat{\gamma} \right) = \left(0.0398, \, 0.0602, \, 0.4000, \, 0.3000 \right)$. The error of $\hat{e}$, $\hat{p}$, and $\hat{r}$ are $0.0190$, $0.0186$, and $0.0055$, respectively.

\section{Conclusion}\label{sec:con}


In conclusion, the discrete time SIR and SEIR models are extended to capture virus spread on a network. The limiting behavior of each model is analyzed and sufficient conditions for estimating spread parameters from data are presented. The developed models are implemented in simulations. To extend this work and improve the performance of the model, we plan to incorporate asymptomatic transmission and apply the results to real data.





\bibliography{bib}

\end{document}